\numberwithin{equation}{section}
\newtheorem{theorem}{Theorem}[section]
\newtheorem{lemma}[theorem]{Lemma}
\title{\textbf{Asymptotic normality of coefficients of P-recursive polynomial sequences}}
\author{Zhongjie Li}
\affil{School of Mathematics and KL-AAGDM \break Tianjin University \break Tianjin 300350, China \break\texttt{lizhongjie@tju.edu.cn}}
\date{}
\begin{document}
\maketitle
\begin{abstract}
In recent years, the asymptotic normality of some famous combinatorial sequences has been  the subject of extensive study. However, the methods used to prove the asymptotic normality of various combinatorial sequences differ significantly. In this paper, we present a sufficient condition for establishing the asymptotic normality of the coefficients of a general P-recursive polynomial sequence. Additionally, we provide two examples that illustrate the application of this sufficient condition.
\end{abstract}

\noindent{\textbf{Keywords}:  P-recursive sequences, asymptotic normality, the central limit theorem, the local limit theorem.}

\section{Introduction}
Suppose that $\{f_n(x)\}_{n\ge 0}$ is a polynomial sequence with nonnegative coefficients $a(n,k)$, that is,
\begin{align}\label{1.1}
f_n(x) = \sum_{k=0}^{n} a(n,k) x^k.
\end{align}
Let
\begin{align*}
p(n,k) = \frac{a(n,k)}{\sum_{j=0}^{n}a(n,j)}
\end{align*}
be the normalized probabilities.
We say that the coefficient $a(n,k)$  is {\it{asymptotically normal with mean $\mu_{n}$ and variance $\sigma_{n}^{2}$ by a central limit theorem}} if
\begin{align}\label{1.2}
\lim_{n\to\infty}\sup_{x\in\mathbb{R}}\left|\sum_{k\leq\mu_n+x\sigma_n}p(n,k)-\frac{1}{\sqrt{2\pi}}\int_{-\infty}^x\exp(-t^2/2)dt\right|=0.
\end{align}
We say that the coefficient $a(n,k)$  is {\it{asymptotically normal with mean $\mu_{n}$ and variance $\sigma_{n}^{2}$ by a local limit theorem}} on the real set $\mathbb{R}$ if
\begin{align}\label{1.3}
\lim_{n\to\infty}\sup_{x\in\mathbb{R}}\left|\sigma_np(n,\lfloor\mu_n+x\sigma_n\rfloor)-\frac{1}{\sqrt{2\pi}}e^{-x^2/2}\right|=0.
\end{align}
It is obvious that (\ref{1.3}) can lead to (\ref{1.2}), see Bender \cite{bender1973central} and Canfield \cite{canfield1977central} for details.\\
\indent In recent years, the asymptotic normality of various combinatorial sequences has been extensively studied. This includes the Eulerian numbers \cite{carlitz1972asymptotic},  the coefficients of $q$-Catalan numbers \cite{chen2008limiting}, the $q$-derangement numbers \cite{chen2010limiting}, the Stirling numbers of the first kind \cite{feller1945fundamental}, the Stirling numbers of the second kind \cite{harper1967stirling}, the resonant sextet numbers \cite{li2021analytic}, the Laplacian coefficients of graphs \cite{wang2017asymptotic} and the Baxter permutations \cite{zhao2024asymptotic}. Most of their proofs are given based on the properties of specific sequences. Next, we will present a systematic method to prove the asymptotic normality of P-recursive polynomial sequences $\{f_{n}(x)\}_{n\ge 0}$.\\
\indent Recall that a {\it P-recursive sequence $\{a_n\}_{n \ge 0}$ of order $d$} satisfies a recurrence relation of the form 
\begin{align*}
p_0(n) a_n + p_1(n) a_{n+1} + \cdots + p_d(n) a_{n+d} = 0,\quad\forall n\ge 1,
\end{align*}
where $p_i(n)$ are polynomials in $n$ (see \cite[Section 6.4]{stanley1999enumerative}). Wimp and Zeilberger \cite{zeilberger1985asymptotics} (see also \cite[Sec. VIII.7]{flajolet2009analytic}) showed that a P-recursive sequence $\{a_n\}_{n \ge 0}$ is asymptotically equal a linear combination of terms in the form of
\begin{align}\label{1.4}
e^{Q(\rho, n)} s(\rho, n),
\end{align}
where
\begin{align}
Q(\rho, n) & = \mu_0 n \log n + \sum_{j = 1}^\rho\mu_j n^{j/\rho}, \label{1.5}\\
s(\rho, n) & = n^{r} \sum_{j = 0}^{t - 1} (\log n)^{j} \sum_{s = 0}^{\infty} b_{sj} n^{- s/ \rho},\label{1.6}
\end{align}
where $\rho,\ t$ being positive integers and $\mu_{j},\ r, \ b_{sj}$ being complex numbers. In this paper, we focus on the case where $\mu_{0} = 0$ in (\ref{1.5}) and $t = 1$ in (\ref{1.6}).\\
\indent Suppose that $\{f_{n}(x)\}_{n\ge 0}$ is a P-recursive polynomial sequence with respect to the variable \(x\). According to the properties of the least common left multiple of operators, we can obtain that both $\{f_{n}^{'}(x)\}_{n\ge 0}$ and $\{f_{n}^{''}(x)\}_{n\ge 0}$ are also P-recursive polynomial sequences. Thus, they all have asymptotic expressions similar to the above. By performing a ratio operation on their asymptotic forms, we finally obtain the mean and variance of the coefficient sequence of $\{f_{n}(x)\}_{n\ge 0}$. Given the condition that the known P-recursive polynomial sequences have only real roots, we can apply this method to demonstrate that their coefficients are asymptotically normal. \\
\indent This paper is organized as follows. In Section \ref{s2}, we provide a sufficient condition for the asymptotic normality of the coefficients of a P-recursive polynomial sequence. In Section \ref{s3}, we give two examples of the asymptotic normality of specific sequences. Moreover, we list the means and variances of some other sequences.

\section{A sufficient condition for asymptotic normality}\label{s2}
In this section, we consider the asymptotic normality of the coefficient of a P-recursive polynomial sequence $\{f_{n}(x)\}_{n \ge 0}$, where $f_{n}(x)$ is expressed in the form shown in (\ref{1.1}). From (\ref{1.4}), we observe that it is asymptotically equal to
\begin{align}
e^{Q(\rho, n)(x)} s(\rho, n)(x). \label{2.1}
\end{align}
Firstly, we explore the relationships among $f_{n}(x),\ f_{n}^{'}(x)$ and $f_{n}^{''}(x)$.

\begin{lemma}\label{l2.1}
Assuming that $\{f_{n}(x)\}_{n \ge 0}$ is a P-recursive polynomial sequence in the variable $x$, then $\{f_{n}^{'}(x)\}_{n \ge 0}$ and $\{f_{n}^{''}(x)\}_{n \ge 0}$ are also.
\end{lemma}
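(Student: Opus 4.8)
The plan is to work in the Ore algebra of operators generated by the shift $S\colon n\mapsto n+1$ and the differentiation $D=\partial_x$ acting on the sequence $\{f_n(x)\}$, and to show by a finite-dimensionality argument that $\{f_n'(x)\}$ (and hence $\{f_n''(x)\}$) is annihilated by a nonzero operator with polynomial coefficients. By hypothesis there is a relation $\sum_{i=0}^{d} p_i(n,x) f_{n+i}(x)=0$ with the $p_i(n,x)$ polynomial in $n$ and $x$ and $p_d\not\equiv 0$; over the field $\mathbb{C}(n,x)$ this lets us express $S^d f$ as a $\mathbb{C}(n,x)$-combination of $f, Sf,\ldots, S^{d-1}f$.

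First I would introduce the $\mathbb{C}(n,x)$-vector space $W=\operatorname{span}_{\mathbb{C}(n,x)}\{S^i f : i\ge 0\}$ and observe that the recurrence forces $\dim W\le d$ and that $W$ is stable under $S$; concretely, every shift reduces as $S^k f=\sum_{j<d} c_{kj}(n,x)\, S^j f$ for suitable rational functions $c_{kj}$. The key step is then to enlarge this to $\widetilde{W}=\operatorname{span}_{\mathbb{C}(n,x)}\{S^i f,\ S^i Df : i\ge 0\}$ and to show it is still finite-dimensional and $S$-stable. Here I would use the two defining commutations $SD=DS$ and $Dx=xD+1$: applying $D$ to the reduction above and using $S^iDf=D(S^if)$ gives
\[
S^k Df=D(S^k f)=\sum_{j<d}\big((\partial_x c_{kj})\,S^j f+c_{kj}\,S^j Df\big),
\]
so every $S^kDf$ lies in the span of $\{S^j f,\ S^j Df : j<d\}$. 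Hence $\dim\widetilde{W}\le 2d$ and $\widetilde{W}$ is $S$-stable.

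Finally, since the $2d+1$ elements $Df, SDf,\ldots, S^{2d}Df$ all lie in the $(\le 2d)$-dimensional space $\widetilde{W}$, they are linearly dependent over $\mathbb{C}(n,x)$; clearing denominators produces a nontrivial relation $\sum_{k=0}^{2d} q_k(n,x)\, f_{n+k}'(x)=0$ with polynomial coefficients, which is exactly the assertion that $\{f_n'(x)\}$ is P-recursive. For $\{f_n''(x)\}$ I would simply apply the same argument to the now-P-recursive sequence $\{f_n'(x)\}$, or equivalently adjoin the shifts of $D^2f$ to $\widetilde{W}$ and repeat the count. In practice the resulting operator can be obtained by an elimination, i.e.\ a least-common-left-multiple computation, in the Ore algebra, but its mere existence is already guaranteed by the above dimension count.

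The main obstacle is precisely this closure step: verifying that differentiating the finitely many reduction relations for the shifts of $f$ does not generate infinitely many new $\mathbb{C}(n,x)$-directions, so that $\widetilde{W}$ stays finite-dimensional. This is exactly where the commutation $SD=DS$ together with the product rule (the relation $Dx=xD+1$) is essential; without the finiteness of $\widetilde{W}$ one would have no a priori bound on the order of the recurrence for $\{f_n'(x)\}$, and the linear-dependence conclusion would fail.
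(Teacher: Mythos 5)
Your proof is correct, but it takes a genuinely different route from the paper's. The paper's argument stays entirely at the operator level: differentiating the defining relation $L_{1}f_{n}(x)=0$ with respect to $x$ yields a mixed relation $L_{1}f_{n}^{'}(x)+L_{2}f_{n}(x)=0$, and the paper then eliminates $f_{n}$ by invoking Ore's theorem on least common left multiples, choosing $U,V$ with $UL_{1}=VL_{2}=\mathrm{lclm}(L_{1},L_{2})$ and applying $V$, so that $VL_{2}f_{n}=UL_{1}f_{n}=0$ and the nonzero operator $VL_{1}$ annihilates $f_{n}^{'}$. You replace this elimination step by a dimension count: after differentiating the reduction relations (your use of $DS=SD$ and the product rule is exactly the same first step as the paper's), the shifts of $f$ and $Df$ span a $\mathbb{C}(n,x)$-space of dimension at most $2d$, so the $2d+1$ elements $Df,SDf,\ldots,S^{2d}Df$ are linearly dependent, and clearing denominators gives the recurrence. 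What each approach buys: yours is self-contained (no appeal to Ore's Theorem 8), yields the explicit order bound $2d$, and is the standard closure-property argument for holonomic/D-finite sequences; the paper's lclm argument is shorter, avoids setting up the vector space of sequence germs on which $S$ acts semilinearly, and matches computational practice, since lclm computations are what packages such as {\tt APCI} actually perform --- a connection you yourself anticipate in your closing remark. One point both treatments gloss over equally: the rational-function coefficients must be cleared of denominators, and a relation valid only for large $n$ must be padded (say, by multiplying through by a polynomial vanishing at the finitely many exceptional indices) to obtain a recurrence valid for all $n$ as the definition requires.
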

\begin{proof}
Since $\{f_{n}(x)\}_{n\ge 0}$ is a P-recursive polynomial sequence in terms of $x$, there exists an operator ${L}_{1}$ such that 
\begin{align}
{L}_{1}f_{n}(x) = 0.\label{2.2}
\end{align}
By taking the derivative of both sides of (\ref{2.2}) with respect to $x$, we find that there exists an operator ${L}_{2}$ such that
\begin{align}
{L}_{1}f_{n}^{'}(x)+{L}_{2}f_{n}(x)=0.\label{2.3}
\end{align}
According to Theorem 8 of \cite{ore1933theory}, there exist operators $U$ and $V$ such that 
\begin{align*}
U {L}_{1} = V {L}_{2} = lclm(L_{1},L_{2}),
\end{align*}
where $lclm(L_{1},L_{2})$ denotes the least common left multiple of $L_{1}$ and $L_{2}$.
Applying the operator $V$ to both sides of (\ref{2.3}) yields
\begin{align*}
& V L_{1} f_{n}^{'}(x) + V L_{2} f_{n}(x)\\ 
= {} & V L_{1} f_{n}^{'}(x) + U L_{1} f_{n}(x) \\
= {} & V L_{1} f_{n}^{'}(x)\\
= {} & 0.
\end{align*}
Consequently, we obtain that $\{f_{n}^{'}(x)\}_{n\ge 0}$ is also a P-recursive polynomial sequence with respect to $x$. Using the same method, we can conclude that this holds for $\{f_{n}^{''}(x)\}_{n\ge 0}$.
\end{proof}

{\noindent \it Remark 1.} Suppose that $\{f_n(x)\}_{n\ge 0}$ is a P-recursive polynomial sequence. According to (\ref{2.1}), we can assume that the asymptotic expression of $f_{n}(x)$ is as follows,
\begin{align}
f_n(x) = C_{1}\cdot e^{Q_{1}(\rho, n)(x)}s_{1}(\rho, n)(x),\label{2.4}
\end{align}
where $C_1$ is a constant. \\
\indent Then we differentiate both sides of (\ref{2.4}) with respect to $x$ and obtain the asymptotic expression of $f_{n}^{'}(x)$,
\begin{align}
 f_{n}^{'}(x) =  C_{1}\cdot e^{Q_{1}(\rho, n)(x)}(Q_{1}^{'}(\rho, n)(x)s_{1}(\rho, n)(x)+s_{1}^{'}(\rho, n)(x)). \label{2.5}
\end{align}
And it can be known from Lemma \ref{l2.1} that $\{f_{n}^{'}(x)\}_{n \ge 0}$ is also a P-recursive polynomial sequence. Consequently, we can further assume that its asymptotic expression is as follows,
\begin{align}
f_{n}^{'}(x) = C_{2}\cdot e^{Q_{2}(\rho, n)(x)}s_{2}(\rho, n)(x), \label{2.6}
\end{align}
where $C_{2}$ is a constant.\\
By comparing (\ref{2.5}) and (\ref{2.6}), we can obtain that
\[Q_{1}(\rho, n)(x) = Q_{2}(\rho, n)(x)\]
and
\[C_{1}\cdot (Q_{1}^{'}(\rho, n)(x)s_{1}(\rho, n)(x)+s_{1}^{'}(\rho, n)(x)) = C_{2}\cdot s_{2}(\rho, n)(x).\]
Therefore, we can derive that the asymptotic expansion of $f_{n}^{'}(x)$,
\[f_n^{'}(x) = C_{2}\cdot e^{Q_{1}(\rho, n)(x)}s_{2}(\rho, n)(x).\]
\indent Similarly, we can also obtain the asymptotic expression of $f_{n}^{''}(x)$ as follows,
\[f_n^{''}(x) = C_{3}\cdot e^{Q_{1}(\rho, n)(x)}s_{3}(\rho, n)(x),\]
where $C_{3}$ is a constant.\\
\indent Next, we present a lemma proposed by Bender \cite{bender1973central} to determine whether the coefficients of a polynomial sequence are asymptotic normal.
\begin{lemma}\label{l2.2}
Suppose that $\{f_{n}(x)\}_{n\ge 0}$ is a sequence of real-rooted polynomial with non-negative coefficients defined by (\ref{1.1}). Let
\begin{align}
\mu_{n} = \frac{f_{n}^{'}(1)}{f_{n}(1)},\ \ \ \ \sigma_{n}^{2} = \frac{f_{n}^{''}(1)}{f_{n}(1)} + \mu_{n} - \mu_{n}^{2}.\label{2.7}
\end{align}
If $\sigma_n^2\to+\infty$ as $n\to + \infty$, then the coefficients of $f_{n}(x)$, that is, the numbers $a(n, k)$ are asymptotically normal by local and central limit theorems with mean $\mu_{n}$ and variance $\sigma_{n}^{2}$.
\end{lemma}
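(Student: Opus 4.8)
The plan is to pass to a probabilistic interpretation of the normalized coefficients and then invoke the classical central and local limit theorems for sums of independent, non-identically distributed Bernoulli random variables. First I would normalize, setting $P_n(x) = f_n(x)/f_n(1) = \sum_{k} p(n,k) x^k$, which is a genuine probability generating function since the $a(n,k)$ are nonnegative and $f_n(1)>0$. Because $f_n$ is real-rooted with nonnegative coefficients, it is positive for $x>0$, so every root is real and nonpositive, and I can factor $f_n(x) = c_n\prod_{i} (x + r_{n,i})$ with $c_n>0$ and $r_{n,i}\ge 0$. Dividing by $f_n(1)=c_n\prod_i(1+r_{n,i})$ gives
$$P_n(x) = \prod_{i} \frac{x + r_{n,i}}{1 + r_{n,i}} = \prod_i \bigl( (1-p_{n,i}) + p_{n,i}\, x\bigr), \qquad p_{n,i} = \frac{1}{1+r_{n,i}} \in (0,1].$$
Each factor is the generating function of a Bernoulli variable $X_{n,i}$ with $\Pr(X_{n,i}=1)=p_{n,i}$, so $P_n$ is the generating function of $S_n = \sum_i X_{n,i}$, a sum of independent Bernoullis, and therefore $p(n,k) = \Pr(S_n = k)$.

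Second, I would check that the parameters in (\ref{2.7}) are exactly the moments of $S_n$. Differentiating at $x=1$ gives $P_n'(1) = \sum_k k\,p(n,k) = \mathbb{E}[S_n]$ and $P_n''(1) = \sum_k k(k-1)p(n,k) = \mathbb{E}[S_n(S_n-1)]$. Since $P_n'(1) = f_n'(1)/f_n(1)$ and $P_n''(1) = f_n''(1)/f_n(1)$, the definitions in (\ref{2.7}) yield $\mu_n = \mathbb{E}[S_n]$ and $\sigma_n^2 = P_n''(1)+P_n'(1)-P_n'(1)^2 = \mathrm{Var}(S_n) = \sum_i p_{n,i}(1-p_{n,i})$.

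Third, the central limit statement (\ref{1.2}) follows from the Lyapunov form of the CLT for triangular arrays. The key point is that the centered summands are uniformly bounded, $|X_{n,i} - p_{n,i}| \le 1$, so $\mathbb{E}|X_{n,i}-p_{n,i}|^3 \le \mathbb{E}(X_{n,i}-p_{n,i})^2 = \mathrm{Var}(X_{n,i})$. Summing, the Lyapunov ratio satisfies
$$\frac{1}{\sigma_n^3}\sum_i \mathbb{E}|X_{n,i}-p_{n,i}|^3 \le \frac{1}{\sigma_n^3}\sum_i \mathrm{Var}(X_{n,i}) = \frac{\sigma_n^2}{\sigma_n^3} = \frac{1}{\sigma_n} \to 0$$
by the hypothesis $\sigma_n^2 \to +\infty$. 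Hence $(S_n - \mu_n)/\sigma_n$ converges in distribution to a standard normal, which is precisely (\ref{1.2}).

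Finally, for the local limit theorem (\ref{1.3}) I would use Fourier inversion. With $\varphi_n(\theta) = \mathbb{E}[e^{i\theta S_n}] = \prod_i (1 - p_{n,i} + p_{n,i} e^{i\theta})$, the inversion formula gives $\Pr(S_n=k) = \frac{1}{2\pi}\int_{-\pi}^{\pi} e^{-ik\theta}\varphi_n(\theta)\,d\theta$; since each increment lies in $\{0,1\}$ the lattice span is $1$, so no periodicity correction is needed. After the substitution $\theta\mapsto\theta/\sigma_n$, establishing (\ref{1.3}) amounts to showing that $\sigma_n\, p(n,\lfloor \mu_n + x\sigma_n\rfloor)$ is uniformly close to the Gaussian density. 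I expect this local estimate to be the main obstacle: one must bound $|\varphi_n(\theta)|$ away from the origin uniformly in $n$ in order to discard the tail of the integral, using $|1 - p_{n,i} + p_{n,i}e^{i\theta}|^2 = 1 - 2p_{n,i}(1-p_{n,i})(1-\cos\theta)$ to obtain an exponential bound of the form $|\varphi_n(\theta)| \le \exp(-c\,\sigma_n^2(1-\cos\theta))$, while on the central range a second-order Taylor expansion of $\log\varphi_n(\theta/\sigma_n)$ matches the Gaussian. This is exactly the content of Bender's argument in \cite{bender1973central}, and the uniformity in $x$ required by (\ref{1.3}) is the delicate point.
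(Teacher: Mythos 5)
The paper does not actually prove this lemma --- it is imported verbatim from Bender \cite{bender1973central}, where it rests on Harper's method. Your reconstruction (factoring the real-rooted, nonnegative-coefficient polynomial into Bernoulli generating functions $\prod_i((1-p_{n,i})+p_{n,i}x)$, identifying $\mu_n$ and $\sigma_n^2$ as the mean and variance of the corresponding sum of independent indicators, Lyapunov's condition for the central limit theorem, and Fourier inversion with the exact identity $|1-p+pe^{i\theta}|^2=1-2p(1-p)(1-\cos\theta)$ giving $|\varphi_n(\theta)|\le\exp(-\sigma_n^2(1-\cos\theta))$ for the local limit theorem) is precisely the classical argument behind that citation, and it is correct.
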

\indent Combined with the above Remark 1 and Lemma \ref{l2.2}, we present the main result of this paper.\\
\indent To make the theorem concise, we call that a P-recursive polynomial sequence $\{f_{n}(x)\}_{n\ge 0}$ is {\it well-defined} if the asymptotic form of $f_{n}^{(i)}(1)$ is expressed as follows,
\begin{align*}
f_{n}^{(i)}(1) = C_{i}^{'}\cdot e^{Q(\rho, n)} n^{r_{i}} \left( \sum_{s = 0}^{M} b_{s}^{(i)^{'}} n^{-s/ \rho} + o(n^{-M/ \rho})\right),
\end{align*}
where $f_{n}^{(i)}(x)$ denotes the $i$-th derivative of $f_{n}(x) (i =0, 1, 2)$, $C_{i}^{'}$ is a constant, $Q(\rho, n) = \sum_{j = 1}^{\rho} \mu_{j} n^{j/ \rho}$, $\rho, M$ are positive integers, $\mu_{j}, r_{i}, b_{s}^{(i)^{'}}$ are real numbers, $r_{1} < r_{2} < r_{3}$ and $b_{0}^{(i)^{'}} > 0$.
\begin{theorem}\label{t2.3}
Suppose that $\{f_{n}(x)\}_{n\ge 0}$ defined by (\ref{1.1}) is well-defined and is a sequence of real-rooted polynomial with non-negative coefficients. Let
\begin{align}
\mu_{n} = \frac{f_{n}^{'}(1)}{f_{n}(1)},\ \ \ \ \sigma_{n}^{2} = \frac{f_{n}^{''}(1)}{f_{n}(1)} + \mu_{n} - \mu_{n}^{2}.\label{2.8}
\end{align}
If the following two conditions are satisfied, then $a(n,k)$ is asymptotically normal.
\begin{itemize}
	\item[{\rm (1)}]
	The limits $\lim_{n \to +\infty} \frac{f_{n}^{'}(1)}{f_{n}(1)\cdot n^{r_2 - r_1}}$ and $\lim_{n \to +\infty} \frac{f_{n}^{''}(1)}{f_{n}(1)\cdot n^{r_3 - r_1}}$ both exist, denoted by $a$ and $b$ respectively.
	\item[{\rm (2)}]
There exists an integer $m$ such that $1/\rho \le m \le r_{3} - r_{1}$. For indices $0\le s \le m - 1$, the coefficients of $n^{s}$ in $\sigma_{n}^{2}$ are zero, while the coefficient of $n^{m}$ in $\sigma_{n}^{2}$ is positive.
\end{itemize}
\end{theorem}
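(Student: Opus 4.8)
The plan is to reduce the theorem to a direct application of Lemma \ref{l2.2}. Since $\{f_{n}(x)\}_{n\ge 0}$ is assumed to be real-rooted with non-negative coefficients, Lemma \ref{l2.2} shows that asymptotic normality of $a(n,k)$ follows as soon as one proves $\sigma_n^2 \to +\infty$ as $n \to +\infty$. All the work therefore concentrates on extracting the leading asymptotic behaviour of $\sigma_n^2$ from the well-defined expansions of $f_n(1)$, $f_n'(1)$ and $f_n''(1)$, and conditions (1) and (2) are precisely what is needed to carry this out.

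First I would form the ratios $\mu_n = f_n'(1)/f_n(1)$ and $f_n''(1)/f_n(1)$. Because all three quantities $f_n^{(i)}(1)$ carry the common factor $e^{Q(\rho,n)}$, this factor cancels in each ratio, so that each ratio is a pure power of $n$ times a quotient of two convergent expansions in $n^{-1/\rho}$. As the denominator has leading coefficient $b_0^{(0)'} > 0$, it is invertible as a formal series in $n^{-1/\rho}$, and performing the long division yields
\[ \mu_n = a\, n^{r_2 - r_1} + (\text{lower-order powers of } n^{1/\rho}), \qquad \frac{f_n''(1)}{f_n(1)} = b\, n^{r_3 - r_1} + (\text{lower-order terms}), \]
where condition (1) is exactly what guarantees that the leading coefficients $a$ and $b$ exist and are finite.

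Next I would substitute these expansions into $\sigma_n^2 = f_n''(1)/f_n(1) + \mu_n - \mu_n^2$ and collect terms by ascending powers of $n^{1/\rho}$. The candidate dominant contributions are $b\, n^{r_3 - r_1}$ from the second-derivative ratio and $-a^2 n^{2(r_2 - r_1)}$ from $\mu_n^2$; in the typical combinatorial situation these (and several subsequent) coefficients cancel, and this cancellation is the content of condition (2). That condition asserts that every coefficient of $n^{s}$ for $0 \le s \le m-1$ vanishes while the coefficient of $n^{m}$ is positive, with $m \ge 1/\rho > 0$. Hence after all cancellations $\sigma_n^2 = c\, n^{m}\bigl(1 + o(1)\bigr)$ for some $c > 0$, so that $\sigma_n^2 \to +\infty$, and Lemma \ref{l2.2} then delivers the asymptotic normality of $a(n,k)$ by both local and central limit theorems.

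The main obstacle I anticipate is the bookkeeping of the series division together with the cancellation: one must verify that the $o(n^{-M/\rho})$ error terms in the well-defined expansions do not contaminate the coefficient of $n^{m}$, which forces $M$ to be chosen large enough that the retained parts of the three expansions control $\sigma_n^2$ down to order $n^{m}$. Managing the interaction between the exponent $r_3 - r_1$ coming from $f_n''(1)/f_n(1)$, the exponent $2(r_2 - r_1)$ coming from $\mu_n^2$, and the spacing $1/\rho$ of the subdominant terms — so that the surviving leading power $n^{m}$ and its positive coefficient are correctly identified — is the delicate step, and conditions (1) and (2) are designed precisely to make this identification unambiguous.
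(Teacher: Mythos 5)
Your proposal is correct and follows essentially the same route as the paper's own proof: cancel the common factor $e^{Q(\rho,n)}$, divide the series expansions to obtain $\mu_n = a\,n^{r_2-r_1}(1+o(1))$ and $f_n''(1)/f_n(1) = b\,n^{r_3-r_1}(1+o(1))$ via condition (1), then use condition (2) to conclude that after cancellation $\sigma_n^2 \sim c\,n^m$ with $c>0$, so $\sigma_n^2 \to +\infty$ and Lemma \ref{l2.2} applies. The paper merely makes the bookkeeping you describe explicit, introducing integers $l_1, l_2$ with $r_3-r_1-l_1/\rho = 2(r_2-r_1)$ and $r_3-r_1-l_2/\rho = r_2-r_1$ and listing the coefficients $bB_s$, $bB_s - a^2 D_{s-l_1}$, $bB_s - a^2 D_{s-l_1} + aA_{s-l_2}$ term by term.
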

\begin{proof}
Let $C_{i} = C_{i}^{'} b_{0}^{(i)^{'}},\ b_{s}^{(i)} = \frac{b_{s}^{(i)^{'}}}{b_{0}^{(i)^{'}}},$ then the asymptotic expression of $f_{n}^{(i)}(1)$ can be reformulated as follows,
\begin{align*}
f_{n}^{(i)}(1) = C_{i}\cdot e^{Q(\rho, n)}n^{r_{i}}\left(1+\sum_{s=1}^{M} b_{s}^{(i)} n^{-s/ \rho} + o(n^{-M/\rho})\right).
\end{align*}
Consequently, we can obtain that
\begin{align}
\mu_{n} = \frac{f_{n}^{'}(1)}{f_{n}(1)} = \frac{C_2}{C_1}\cdot n^{r_2 - r_1}\left(1 + \frac{A_{1}}{n^{1/\rho}} + \dots + \frac{A_{M}}{n^{M/\rho}} + o\left(\frac{1}{n^{M/\rho}}\right)\right), \label{2.9}
\end{align}
where $A_{s}$ is a polynomial in $b_{1}^{(0)},\ \cdots,\ b_{s}^{(0)},\ b_{1}^{(1)},\ \cdots,\ b_{s}^{(1)}$.\\
From this, we find that 
\begin{align}
\mu_{n}^{2} = \frac{C_2^2}{C_1^2}\cdot n^{2r_2 - 2r_1}\left(1 + \frac{D_{1}}{n^{1/\rho}} + \dots + \frac{D_{M}}{n^{M/\rho}} + o\left(\frac{1}{n^{M/\rho}}\right)\right)\label{2.10}
\end{align}
and
\begin{align*}
\frac{C_2}{C_1} = \lim_{n \to +\infty} \frac{f_{n}^{'}(1)}{f_{n}(1)\cdot n^{r_2 - r_1}} = a,
\end{align*}
where $D_s$ is a polynomial in $A_{1},\ \dots,\ A_{s}$.\\
\indent By similar reasoning, we establish that
\begin{align}
\frac{f_{n}^{''}(1)}{f_{n}(1)} = \frac{C_3}{C_1}\cdot n^{r_3 - r_1} \left(1 + \frac{B_{1}}{n^{1/\rho}} + \dots + \frac{B_{M}}{n^{M/\rho}} + o\left(\frac{1}{n^{M/\rho}}\right)\right)\label{2.11}
\end{align}
and
\begin{align*}
\frac{C_3}{C_1} = \lim_{n \to +\infty} \frac{f_{n}^{''}(1)}{f_{n}(1)\cdot n^{r_3 - r_1}} = b,
\end{align*}
where $B_{s}$ is a polynomial in $b_{1}^{(0)},\ \cdots,\ b_{s}^{(0)},\ b_{1}^{(2)},\ \cdots,\ b_{s}^{(2)}$.\\
Given that $r_3 > r_2 > r_1$, it follows that $r_3 - r_1 > r_2 - r_1$. Without loss of generality, we assume that $r_3 - r_1 \ge 2r_2 - 2r_1$. Consequently, there exists an integer $l_{1}$ such that $r_3 - r_1 - l_1/\rho = 2r_2 - 2r_1$, and another an integer $l_2$ such that $r_3 - r_1 - l_2/\rho = r_2 - r_1$. Thus, by (\ref{2.9}), (\ref{2.10}) and (\ref{2.11}), we derive that
\begin{align*}
\sigma_{n}^{2} &= \frac{f_{n}^{''}(1)}{f_{n}(1)} + \mu_{n} - \mu_{n}^{2}\\
&= \frac{C_3}{C_1}\cdot n^{r_3 - r_1} \left(1 + \frac{B_{1}}{n^{1/\rho}} + \dots + \frac{B_{M}}{n^{M/\rho}} + o\left(\frac{1}{n^{M/\rho}}\right)\right)\\
& + \frac{C_2}{C_1}\cdot n^{r_2 - r_1}\left(1 + \frac{A_{1}}{n^{1/\rho}} + \dots + \frac{A_{M}}{n^{M/\rho}} + o\left(\frac{1}{n^{M/\rho}}\right)\right)\\
& - \frac{C_2^2}{C_1^2}\cdot n^{2r_2 - 2r_1} \left(1 + \frac{D_{1}}{n^{1/\rho}} + \dots + \frac{D_{M}}{n^{M/\rho}} + o\left(\frac{1}{n^{M/\rho}}\right)\right)\\
&= b\cdot n^{r_3 - r_1} + b\cdot B_{1}n^{r_3 - r_1 - 1/\rho} + \cdots + b\cdot B_{l_1 - 1}n^{r_3 - r_1 - (l_1 - 1)/\rho}\\
&+ (bB_{l_1} - a^2)\cdot n^{r_3 - r_1 - l_1/\rho} + (bB_{l_1 + 1}- a^2D_{1})\cdot n^{r_3 - r_1 - (l_1 + 1)/\rho}\\
&+ \cdots + (bB_{l_2 - 1}- a^2D_{l_2 - l_1 -1})\cdot n^{r_3 - r_1 - (l_2 - 1)/\rho}\\
&+ (bB_{l_2} - a^2D_{l_2 - l_1} + a)\cdot n^{r_3 - r_1 - l_2/\rho}\\
&+ (bB_{l_2+1} - a^2D_{l_2 - l_1 +1} + a A_1)\cdot n^{r_3 - r_1 - (l_2 + 1)/\rho} + \cdots\\
&+ (bB_{(r_3 - r_1)\rho - 1} - a^2D_{(r_3 - r_1)\rho - l_1 - 1} + aA_{(r_3 - r_1)\rho - l_2 - 1})\cdot n^{1/\rho} + C + o(1).
\end{align*}
According to condition $(2)$, as $n\to + \infty$, it follows that $\sigma_{n}^{2} \to + \infty$. Therefore, $a(n, k)$ is asymptotically normal.
\end{proof}

\section{The asymptotic normality of some sequences}\label{s3}
In this section, we utilize Theorem \ref{t2.3} to demonstrate the asymptotic normality of two specific sequences. Moreover, we list the means and variances of some other combinatorial sequences. \\
\indent Recall that {\it {the hypergeometric polynomial of degree $n$}} is expressed in the following form,
\begin{align*}
_pF_q\left(
\begin{gathered}
a_1,\ a_2,\ \cdots,\ a_p\\
b_1,\ \cdots,b_q
\end{gathered}
;\,x\right) 
=
\sum_{k = 0}^{n} \frac{(a_1)_k (a_2)_k \cdots (a_p)_k}{(b_1)_k \cdots (b_q)_k}\cdot \frac{x^k}{k!},
\end{align*}
where $(a)_n = a(a + 1)\cdots (a + n - 1)$ is the Pochhammer symbol. We begin by presenting a lemma established by Driver {\it{et al.}} \cite[Theorem 7]{driver2007polya}, which proves that the roots of the hypergeometric polynomials $_3F_2$ are all real. 
\begin{lemma}\label{l3.1}
Let $a, b > 0$ and let $l, m, n \in \mathbb{N}$. Then the polynomials
\begin{align*}
_3F_2\left(
\begin{gathered}
-n,\ -m,\ a + l\\
a,\ b
\end{gathered}
;\,x\right) 
\end{align*}
has only negative real zeros.
\end{lemma}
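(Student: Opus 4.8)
The plan is to exhibit the coefficient sequence of the polynomial as the image of the coefficients of $(1+x)^n$ under a chain of \emph{multiplier sequences}, and then to invoke the classical P\'olya--Schur theory, by which a multiplier sequence sends polynomials with only real zeros to polynomials with only real zeros. First I would rewrite the coefficient of $x^k$: using $(-n)_k=(-1)^k\binom{n}{k}k!$ and $(-m)_k=(-1)^k\binom{m}{k}k!$ the two signs cancel, giving
\[
{}_3F_2\!\left(\begin{gathered}-n,\ -m,\ a+l\\ a,\ b\end{gathered};x\right)=\sum_{k\ge 0}\binom{n}{k}\binom{m}{k}\,k!\;\frac{(a+l)_k}{(a)_k\,(b)_k}\,x^k .
\]
This is a polynomial of degree $\min(n,m)$ with strictly positive coefficients, so every real zero is automatically negative; hence it is enough to prove real-rootedness, and to do so I would build the coefficient sequence up from $\{\binom{n}{k}\}$, the coefficients of the manifestly negative-real-rooted polynomial $(1+x)^n$.

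The first multiplier I would apply is $\gamma^{(1)}_k=\binom{m}{k}k!=m(m-1)\cdots(m-k+1)$ (with $\gamma^{(1)}_k=0$ for $k>m$). By the P\'olya--Schur criterion this is a multiplier sequence of the first kind, because its associated function $\sum_{k}\gamma^{(1)}_k x^k/k!=(1+x)^m$ belongs to the Laguerre--P\'olya class, i.e. is a limit of real polynomials with only real zeros of one sign. It carries $(1+x)^n$ to $\sum_k\binom{n}{k}\binom{m}{k}k!\,x^k$, which therefore has only negative real zeros. The second multiplier is $\gamma^{(2)}_k=1/(b)_k$; its associated function is $\sum_k x^k/((b)_k\,k!)={}_0F_1(;b;x)$, a Bessel function (up to a change of variable, $J_{b-1}$), which for $b>0$ has only real negative zeros and lies in the Laguerre--P\'olya class. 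Applying it produces exactly ${}_2F_1(-n,-m;b;x)$, still with only negative real zeros.

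Finally I would apply $\gamma^{(3)}_k=(a+l)_k/(a)_k$. Here the point is that, as a function of $k$, $(a+l)_k/(a)_k=(a+k)(a+k+1)\cdots(a+k+l-1)/(a)_l=\phi(k)$ is the evaluation of the polynomial $\phi(z)=\prod_{j=0}^{l-1}(z+a+j)/(a)_l$, whose zeros $-a,-a-1,\dots,-a-l+1$ are all negative because $a>0$. Laguerre's theorem then makes $\{\phi(k)\}$ a multiplier sequence, and applying it recovers precisely the displayed ${}_3F_2$; hence that polynomial has only real, and therefore only negative, zeros.

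I expect the crux to be the second step. Showing that $\{1/(b)_k\}$ is a multiplier sequence genuinely depends on the nontrivial classical fact that ${}_0F_1(;b;\cdot)$ (equivalently the Bessel function $J_{b-1}$, or the reciprocal Gamma function $1/\Gamma(b+\cdot)$) has only real negative zeros; the first and third multipliers, by contrast, reduce to the elementary observations that $(1+x)^m$ and $\phi$ have only real nonpositive zeros. A minor bookkeeping point to verify along the way is that each $\gamma^{(i)}_k$ is strictly positive for $0\le k\le\min(n,m)$, so that positivity of all coefficients, and hence the strict negativity of every zero, is preserved at each stage.
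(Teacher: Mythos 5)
Your proof is correct, but there is nothing in the paper to compare it against: the paper does not prove this lemma at all, it simply quotes it as Theorem 7 of Driver, Jordaan and Mart{\'i}nez-Finkelshtein \cite{driver2007polya}. In effect you have reconstructed, in self-contained form, the style of argument of that cited source (whose title announces P\'olya frequency sequences): the decisive step --- applying the Laguerre multiplier sequence $\{(a+l)_k/(a)_k\}=\{\phi(k)\}$, where $\phi(z)=\prod_{j=0}^{l-1}(z+a+j)/(a)_l$ has only negative zeros, to pass from ${}_2F_1(-n,-m;b;x)$ to the ${}_3F_2$ --- is exactly how the integer gap $l$ between the upper parameter $a+l$ and the lower parameter $a$ gets used, and your identity $(a+l)_k/(a)_k=(a+k)_l/(a)_l$ (valid precisely because $l\in\mathbb{N}$) is the point where that hypothesis enters. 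Your other two steps are also correctly justified: $\bigl\{\binom{m}{k}k!\bigr\}$ is a multiplier sequence by P\'olya--Schur since its generating function $(1+x)^m$ lies in the Laguerre--P\'olya class of type I (equivalently, your first step is Schur's composition theorem applied to $(1+x)^n$ and $(1+x)^m$), and $\{1/(b)_k\}$ is a multiplier sequence because ${}_0F_1(;b;x)$ is, up to the substitution $x=-(z/2)^2$ and a power of $z$, the Bessel function $J_{b-1}(z)$, whose zeros are all real when $b-1>-1$; this order-$1/2$ entire function therefore has only negative zeros and genus zero, hence lies in the Laguerre--P\'olya class of type I. You correctly flag this Bessel fact as the one genuinely nonelementary input, and the final upgrade from real zeros to negative zeros via strict positivity of the coefficients is fine, as are the degenerate cases $n=0$, $m=0$, $l=0$, which are trivial or reduce to the ${}_2F_1$ case. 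So: correct and complete in outline, and best understood not as an alternative to anything in the paper but as an actual proof of the result the paper only cites.
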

\indent Let 
\[f_n(x) = \sum_{k = 0}^{n} f(n, k) x^k = \sum_{k = 0}^{n} \binom{n}{k}^2 \binom{n+k}{k} x^k\]
be the Ap{\'{e}}ry polynomial. Chen and Xia \cite{chen2011log} studied the 2-log-convexity of $f_{n}(1)$, while Mao and Pei \cite{mao2023asymptotic} proved the asymptotic log-convexity of Ap{\' e}ry-like numbers. Next, we will prove the asymptotic normality of $f(n,k)$.  Let the mean be $\mu_{n} = \frac{f_{n}^{'}(1)}{f_{n}(1)}$ and the variance be $\sigma_{n}^{2} = \frac{f_{n}^{''}(1)}{f_{n}(1)} + \mu_{n} - \mu_{n}^{2}$.
\begin{theorem}
The coefficients $f(n, k)$ of the Ap{\'{e}}ry polynomial $f_{n}(x)$ are asymptotically normal by local and central limit theorems with $\mu_{n} \sim \frac{-1 + \sqrt{5}}{2} n$, $\sigma_{n}^{2} \sim \frac{5 - 2\sqrt{5}}{5} n$.  
\end{theorem}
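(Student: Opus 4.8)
The plan is to verify, for the Apéry polynomial, the hypotheses of Theorem \ref{t2.3}; the only genuinely new inputs are real-rootedness and the asymptotic data $r_1<r_2<r_3$, the limits $a,b$ of condition (1), and the sign of the leading coefficient of $\sigma_n^2$. First I would recast $f_n(x)$ as a hypergeometric polynomial. From $\binom{n}{k}=(-1)^k(-n)_k/k!$ and $\binom{n+k}{k}=(n+1)_k/k!$ one gets $\binom{n}{k}^2\binom{n+k}{k}=(-n)_k^2(n+1)_k/(k!)^3$, and since $(1)_k=k!$ this is exactly the summand of ${}_3F_2(-n,-n,n+1;1,1;x)$, so $f_n(x)={}_3F_2(-n,-n,n+1;1,1;x)$. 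Applying Lemma \ref{l3.1} with $a=b=1$ and $l=m=n$ (so that $a+l=n+1$, $-m=-n$, and $a,b>0$, $l,m,n\in\mathbb{N}$) shows $f_n$ has only negative real zeros; its coefficients are manifestly nonnegative. This supplies the real-rootedness assumption of Theorem \ref{t2.3}.

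Next I would produce the asymptotic expansions of $f_n(1)$, $f_n'(1)$, $f_n''(1)$. Since the Apéry polynomial is P-recursive in $n$, Lemma \ref{l2.1} makes $f_n'$ and $f_n''$ P-recursive as well, so the three evaluated sequences satisfy linear recurrences with polynomial coefficients; these I would obtain by creative telescoping applied to $\binom{n}{k}^2\binom{n+k}{k}$, $k\binom{n}{k}^2\binom{n+k}{k}$ and $k(k-1)\binom{n}{k}^2\binom{n+k}{k}$. Feeding each recurrence into the Birkhoff--Trjitzinsky/Wimp--Zeilberger machinery behind (\ref{1.4})--(\ref{1.6}) gives expansions of the well-defined shape with $\rho=1$ and a common exponential factor $e^{Q(\rho,n)}=\lambda^n$, where $\lambda=\frac{11+5\sqrt5}{2}=\varphi^5$ is the dominant root of $\lambda^2-11\lambda-1=0$ attached to the Apéry recurrence $n^2a_n=(11n^2-11n+3)a_{n-1}+(n-1)^2a_{n-2}$. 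Differentiation does not change the exponential growth rate, so as in Remark 1 all three sequences share the same $Q$ and differ only in the polynomial exponents; I expect $r_2-r_1=1$ and $r_3-r_1=2$.

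With $\rho=1$, condition (1) reduces to reading off $a=\lim_n f_n'(1)/(n\,f_n(1))$ and $b=\lim_n f_n''(1)/(n^2 f_n(1))$ from the leading terms. I expect $a=\frac{\sqrt5-1}{2}$, the positive root of $x^2+x-1=0$, and, crucially, $b=a^2=\frac{3-\sqrt5}{2}$; this last equality is the probabilistic statement that $f_n''(1)/f_n(1)=E[k(k-1)]\sim E[k]^2$ to leading order, and it forces the $n^2$-terms of $f_n''(1)/f_n(1)$ and $\mu_n^2$ to cancel in $\sigma_n^2$. Hence the top surviving power of $n$ in $\sigma_n^2$ is $n^1$, and taking $m=1$ (which satisfies $1/\rho=1\le m\le r_3-r_1=2$) reduces condition (2) to showing the coefficient of $n$ in $\sigma_n^2$ is positive, which I would compute to be $\frac{5-2\sqrt5}{5}>0$. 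Theorem \ref{t2.3} then delivers asymptotic normality with $\mu_n\sim\frac{-1+\sqrt5}{2}n$ and $\sigma_n^2\sim\frac{5-2\sqrt5}{5}n$.

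The main obstacle is the asymptotic bookkeeping: confirming the exact cancellation $b=a^2$ and extracting the precise coefficient of $n$ in $\sigma_n^2$ both require the subleading terms of the three expansions, namely the constant term of $\mu_n$ and the $n^1$-coefficient of $f_n''(1)/f_n(1)$, not merely their leading terms. As an independent check on the target constants I would analyze the defining sum by Laplace's method: setting $k=xn$, the summand of $f_n(1)$ behaves like $e^{n g(x)}$ with $g(x)=-3x\log x-2(1-x)\log(1-x)+(1+x)\log(1+x)$, whose maximizer solves $x^3=(1-x)^2(1+x)$, i.e. $x^2+x-1=0$, giving $x^*=\frac{-1+\sqrt5}{2}$, while $-g''(x^*)=5+2\sqrt5$ yields variance $n/(5+2\sqrt5)=\frac{5-2\sqrt5}{5}\,n$. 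These match the claimed $\mu_n$ and $\sigma_n^2$ and pin down exactly what the recurrence computation must reproduce.
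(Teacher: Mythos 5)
Your proposal is correct and follows the paper's route in all essentials: the same rewriting $f_n(x)={}_3F_2(-n,-n,n+1;1,1;x)$ with Lemma \ref{l3.1} for real-rootedness, creative-telescoping recurrences for $f_n(1)$, $f_n'(1)$, $f_n''(1)$ (this is exactly what the paper's package {\tt APCI} does), Wimp--Zeilberger asymptotics of those recurrences (this is what {\tt P-rec.m} does), the same constants $\lambda=\tfrac{11+5\sqrt5}{2}$, $r_2-r_1=1$, $r_3-r_1=2$, $a=\tfrac{\sqrt5-1}{2}$, $b=a^2=\tfrac{3-\sqrt5}{2}$, and the same appeal to Theorem \ref{t2.3}. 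The one place where you genuinely diverge is the step you yourself flag as the obstacle: the Birkhoff--Trjitzinsky expansions determine each of $f_n(1)$, $f_n'(1)$, $f_n''(1)$ only up to an unknown multiplicative constant $C_i$ (fixed by initial conditions), so ``reading off $a$ and $b$ from the leading terms'' is not possible from the three separate recurrences alone. The paper closes this gap by deriving two additional recurrences, (\ref{3.2}) and (\ref{3.3}), which express $f_n'(1)$ and $f_n''(1)$ as combinations of $f_{n+1}(1)$ and $f_n(1)$; combined with $\lim_n f_{n+1}(1)/f_n(1)=\tfrac{11+5\sqrt5}{2}$ from (\ref{3.1}), these yield $C_2/C_1$ and $C_3/C_1$ exactly, and then the subleading coefficients needed for the cancellation $b=a^2$ and for the coefficient $\tfrac{5-2\sqrt5}{5}$ of $n$. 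Your substitute --- Laplace's method on the defining sum, with $g(x)=-3x\log x-2(1-x)\log(1-x)+(1+x)\log(1+x)$, critical point solving $x^2+x-1=0$, and $-g''(x^*)=5+2\sqrt5$ --- is a valid and correctly executed alternative that pins down the same mean and variance; it is more classical and self-contained, whereas the paper's contiguous-recurrence trick stays entirely inside the holonomic toolkit and also delivers the full subleading expansions (\ref{3.4})--(\ref{3.5}) rather than just the leading constants. Either way the verification of conditions (1) and (2) of Theorem \ref{t2.3} goes through, so your argument is sound provided you promote the Laplace computation from ``independent check'' to the actual mechanism that fixes $C_2/C_1$ and $C_3/C_1$.
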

\begin{proof}
By transforming $f_{n}(x)$, we can express it as follows,
\begin{align*}
f_n(x) = {_3F_2\left(
\begin{gathered}
-n,\ -n,\ n + 1\\
1,\ 1
\end{gathered}
;\,x\right)}. 
\end{align*}
Consequently, based on Lemma \ref{l3.1}, we can know that $f_n(x)$ has only negative real roots.\\
\indent Using the Maple package {\tt{APCI}} developed by Hou \cite{hou2012maple}, we can derive the following recurrence relations, 
\begin{align*}
f_{n+2}(1) & = \frac{(11n^2+33n+25)}{(n+2)^2} f_{n+1}(1) + \frac{(n+1)^2}{(n+2)^2} f_{n}(1),\\
f_{n+2}^{'}(1) & = \frac{55n^3+187n^2+190n+48}{(5n+2)(n+2)(n+1)} f_{n+1}^{'}(1) + \frac{n(5n+7)}{(n+1)(5n+2)}f_{n}^{'}(1),\\
f_{n+2}^{''}(1) & = \frac{55n^4+154n^3+110n^2-7n-12}{n^2(n+2)(5n-1)}f_{n+1}^{''}(1) \\
& + \frac{(n+1)(n+3)(5n+4)(n-1)}{n^2(n+2)(5n-1)}f_{n}^{''}(1).
\end{align*}
Then, utilizing the Mathematica package {\tt P-rec.m} provided by Hou and Zhang \cite{hou2019asy}, we can obtain the following asymptotic expressions,
\begin{align*}
f_n(1) = {} & C_1\cdot \left(\frac{11 + 5 \sqrt{5}}{2}\right)^n n^{-1} \left(1 + \frac{-5 + \sqrt{5}}{10n} + \frac{13 - 5\sqrt{5}}{50 n^2} + o\left(\frac{1}{n^2}\right)\right),\\
f_{n}^{'}(1) = {} & C_2\cdot \left(\frac{11 + 5 \sqrt{5}}{2}\right)^n \left(1 - \frac{-1 + \sqrt{5}}{5n} + \frac{-1 + \sqrt{5}}{25 n^2} + o\left(\frac{1}{n^2}\right)\right),\\
f_{n}^{''}(1) = {} & C_3\cdot \left(\frac{11 + 5 \sqrt{5}}{2}\right)^n n \left(1 + \frac{9 - 11\sqrt{5}}{10n} - \frac{18(-3 + \sqrt{5})}{25 n^2} + o\left(\frac{1}{n^2}\right)\right),
\end{align*}
where $C_1, C_2, C_3$ are all constants. Furthermore, we find that
\begin{align}
\lim_{n\to +\infty} \frac{f_{n+1}(1)}{f_{n}(1)} = \frac{11 + 5\sqrt{5}}{2}.\label{3.1}
\end{align}
Next, based on the Maple package {\tt APCI}, we derive the following recurrence relations,
\begin{align}
f_{n}^{'}(1) & = \frac{(n+1)^2}{5n} f_{n+1}(1) - \frac{(n+1)(8n+3)}{5n} f_n(1),\label{3.2}\\
f_{n}^{''}(1) & = -\frac{(n^2+3n-1)(n+1)^2}{5n(n-1)} f_{n+1}(1) + \frac{(n+1)(13n^3+27n^2+n-3)}{5n(n-1)} f_{n}(1).\label{3.3}
\end{align}
Thus, according to Theorem \ref{t2.3}, Equations (\ref{3.1}) and (\ref{3.2}), we can conclude that
\begin{align*}
\frac{C_2}{C_1} & = \lim_{n\to  +\infty}\frac{f_{n}^{'}(1)}{f_{n}(1)\cdot n}\\
& = \lim_{n\to +\infty} \frac{(n+1)^2}{5n^2}\frac{f_{n+1}(1)}{f_{n}(1)} - \frac{(n+1)(8n+3)}{5n^2}\\
& = \frac{-1 + \sqrt{5}}{2}.
\end{align*}
Similarly, we can also derive that
\begin{align*}
\frac{C_3}{C_1} = \frac{3-\sqrt{5}}{2}.
\end{align*}
Hence, we obtain the mean as follows,
\begin{align}
\mu_{n} = {} & \frac{f_{n}^{'}(1)}{f_{n}(1)}\notag\\
= {} & \frac{-1 + \sqrt{5}}{2}\cdot n\left(1+\frac{7 - 3\sqrt{5}}{10n} + \frac{5 - 2\sqrt{5}}{25n^2} + o\left(\frac{1}{n^2}\right)\right),\label{3.4}\\
\sim {} & \frac{-1 + \sqrt{5}}{2} n \notag
\end{align}
and
\begin{align}
\frac{f_{n}^{''}(1)}{f_{n}(1)} = \frac{3-\sqrt{5}}{2}\cdot n^2\left(1+\frac{7 - 6\sqrt{5}}{5n} - \frac{2(-40 + 17\sqrt{5})}{25n^2} + o\left(\frac{1}{n^2}\right)\right).\label{3.5}
\end{align}
From (\ref{3.4}) and (\ref{3.5}), we can finally get the variance,
\begin{align*}
\sigma_{n}^{2} & = \frac{f_{n}^{''}(1)}{f_{n}(1)} + \mu_{n} - \mu_{n}^{2}\\
& = \frac{3-\sqrt{5}}{2}\cdot n^2\left(1+\frac{7 - 6\sqrt{5}}{5n} - \frac{2(-40 + 17\sqrt{5})}{25n^2} + o\left(\frac{1}{n^2}\right)\right)\\
& + \frac{-1 + \sqrt{5}}{2}\cdot n\left(1+\frac{7 - 3\sqrt{5}}{10n} + \frac{5 - 2\sqrt{5}}{25n^2} + o\left(\frac{1}{n^2}\right)\right)\\
& - \frac{3-\sqrt{5}}{2}\cdot n^2 \left(1+\frac{7-3\sqrt{5}}{5n} + \frac{67-29\sqrt{5}}{50n^2}+o\left(\frac{1}{n^2}\right)\right)
\end{align*}
\begin{align*}
& = \frac{5-2\sqrt{5}}{5}n + C +o(1)\\
& \sim \frac{5-2\sqrt{5}}{5}n.
\end{align*}
Therefore, according to Theorem \ref{2.3}, we can conclude that $f(n, k)$ is asymptotically normal.
\end{proof}
Next, we provide another example to illustrate the application of Theorem \ref{t2.3}. To begin, we present a lemma proposed by Driver {\it{et al.}} \cite[Theorem 9]{driver2007polya} that establishes the reality of the roots of the hypergeometric function $_3F_2$.
\begin{lemma}\label{l3.3}
Let $a,b > 0$ and let $l, m, n \in \mathbb{N}$. Then the polynomial
\begin{align*}
_3F_2\left(
\begin{gathered}
-n,\ -m,\ -l\\
a,\ b
\end{gathered}
;\,x\right) 
\end{align*}
has only positive real zeros.
\end{lemma}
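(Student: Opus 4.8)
The plan is to realize the polynomial as the image of a manifestly real-rooted polynomial under a multiplier sequence, in the spirit of the P\'olya--Schur theory. Writing $N=\min\{n,m,l\}$ and using $(-n)_k=(-1)^k n!/(n-k)!$, the terminating series can be rewritten as
\begin{align*}
{_3F_2}\!\left(\begin{gathered}-n,\ -m,\ -l\\ a,\ b\end{gathered};x\right)=Q(-x),\qquad Q(x)=\sum_{k=0}^{N}\frac{n!\,m!\,l!}{(n-k)!\,(m-k)!\,(l-k)!\,(a)_k(b)_k\,k!}\,x^k,
\end{align*}
so it suffices to show that $Q$, a polynomial with strictly positive coefficients, has only real zeros; these are then automatically negative, giving the asserted positive zeros of the original $_3F_2$.

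Recall that a sequence $\{\gamma_k\}_{k\ge0}$ is a multiplier sequence if $\sum_k\gamma_k a_k x^k$ is real-rooted whenever $\sum_k a_k x^k$ is, and that by the P\'olya--Schur theorem this holds as soon as $\Phi(x)=\sum_k \frac{\gamma_k}{k!}x^k$ lies in the Laguerre--P\'olya class with all zeros of the same sign. First I would check the four elementary sequences out of which $Q$ is built. For $\gamma_k=n!/(n-k)!$ and $\gamma_k=m!/(m-k)!$ the generating functions are $(1+x)^n$ and $(1+x)^m$, whose zeros all lie at $-1$; hence both are multiplier sequences. For $\gamma_k=1/(a)_k$ and $\gamma_k=1/(b)_k$ the generating function is $_0F_1(;a;x)$ (resp. $_0F_1(;b;x)$), and via the identity $J_\nu(z)=\tfrac{(z/2)^\nu}{\Gamma(\nu+1)}\,{_0F_1}(;\nu+1;-z^2/4)$ its zeros correspond to the zeros of the Bessel function $J_{a-1}$; since $a>0$ gives $a-1>-1$, the classical theorem that $J_\nu$ has only real zeros for $\nu>-1$ shows that $_0F_1(;a;x)$ is an order-$\tfrac12$ entire function with only negative real zeros, hence lies in the Laguerre--P\'olya class. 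Thus all four sequences are multiplier sequences.

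Since the termwise product of multiplier sequences is again a multiplier sequence (apply them in succession), the sequence
\begin{align*}
\mu_k=\frac{n!}{(n-k)!}\cdot\frac{m!}{(m-k)!}\cdot\frac{1}{(a)_k}\cdot\frac{1}{(b)_k}
\end{align*}
is a multiplier sequence. Applying it to the real-rooted polynomial $(1+x)^l=\sum_k\binom{l}{k}x^k$ produces exactly $Q(x)$ (the factors $n!/(n-k)!$ and $m!/(m-k)!$ automatically truncate the sum at $k=N$), so $Q$ is real-rooted; its positive coefficients then force every zero to be negative, and translating back through $x\mapsto-x$ yields only positive real zeros for the $_3F_2$ polynomial, as claimed.

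I expect the single genuinely analytic point to be the membership $_0F_1(;a;\cdot)\in\mathcal{LP}$, that is, establishing that this Bessel-type function has no complex zeros; everything else is the bookkeeping of rewriting the series and the formal closure of multiplier sequences under products. An alternative, more computational route would fix $n,m,a,b$ and induct on $l$, using a contiguous relation for $_3F_2$ together with a zero-interlacing argument, but this requires producing the correct three-term relation and seems less transparent than the multiplier-sequence argument above.
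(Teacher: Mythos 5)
Your proof is correct, but the comparison here is asymmetric: the paper does not prove this lemma at all — it is quoted as Theorem 9 of Driver, Jordaan and Mart\'inez-Finkelshtein \cite{driver2007polya}, so the paper's only ``proof'' is a citation. What you have done is supply a self-contained argument, and it is in exactly the spirit of that cited source (whose title is literally about P\'olya frequency sequences): write ${}_3F_2(-n,-m,-l;a,b;x)=Q(-x)$, where $Q$ has strictly positive coefficients (using $(-1)^{3k}=(-1)^k$), and realize $Q$ as the image of the real-rooted polynomial $(1+x)^l$ under the termwise multiplier sequence $\left\{\frac{n!}{(n-k)!}\cdot\frac{m!}{(m-k)!}\cdot\frac{1}{(a)_k(b)_k}\right\}$. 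Every ingredient you invoke is classical and correctly deployed: the P\'olya--Schur criterion; the certificates $(1+x)^n$, $(1+x)^m$ for the falling-factorial sequences (whose vanishing beyond $k=n$, $k=m$ also handles the truncation); closure of multiplier sequences under termwise products via composition; and the membership of ${}_0F_1(;a;\cdot)$ in the Laguerre--P\'olya class, which follows from the reality of the zeros of $J_{a-1}$ for $a-1>-1$ together with Hadamard factorization of an order-$\frac12$ entire function. One cosmetic suggestion: in the Bessel step, work with the even entire function $z^{-\nu}J_{\nu}(z)$ (Lommel--Hurwitz) rather than $J_{\nu}$ itself, so that no branch of $(z/2)^{\nu}$ needs to be chosen when transporting complex zeros; and note explicitly that $Q(0)=1$, so real-rootedness plus positive coefficients forces all zeros of $Q$ to be strictly negative, hence all zeros of the ${}_3F_2$ strictly positive. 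In short, your proposal fills in a proof the paper delegates to the literature, rather than diverging from any argument the paper actually gives.
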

\indent Let 
\[g_n(x) = \sum_{k = 0}^{n} g(n, k) x^k = \sum_{k = 0}^{n} \binom{n}{k}^3 x^k\]
be the Franel polynomial. And let the mean be $\mu_{n} = \frac{g_{n}^{'}(1)}{g_{n}(1)}$ and the variance be $\sigma_{n}^{2} = \frac{g_{n}^{''}(1)}{g_{n}(1)} + \mu_{n} - \mu_{n}^{2}$.
\begin{theorem}
The coefficients $g(n, k)$ of the Franel polynomial $g_{n}(x)$ are asymptotically normal by local and central limit theorems with $\mu_{n} \sim \frac{n}{2}$, $\sigma_{n}^{2} \sim \frac{n}{12}$.  
\end{theorem}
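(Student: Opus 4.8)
The plan is to apply Theorem \ref{t2.3} exactly as in the Apéry example. The first task is to verify the two standing hypotheses of that theorem for the Franel polynomial: real-rootedness and nonnegativity of the coefficients. Since $\binom{n}{k} = (-1)^k (-n)_k/k!$, a short computation gives
\[
g_n(x) = {_3F_2\left(
\begin{gathered}
-n,\ -n,\ -n\\
1,\ 1
\end{gathered}
;\,-x\right)},
\]
so that $g_n(-x) = {_3F_2}(-n,-n,-n;1,1;x)$, which by Lemma \ref{l3.3} (with $a=b=1$ and $l=m=n$) has only positive real zeros; hence $g_n(x)$ has only negative real zeros. The coefficients $\binom{n}{k}^3$ are manifestly nonnegative, so $\{g_n(x)\}_{n\ge 0}$ satisfies the hypotheses of Theorem \ref{t2.3}. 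I would also record at this stage the symmetry $g(n,k)=g(n,n-k)$, which forces $\mu_n = g_n'(1)/g_n(1) = n/2$ \emph{exactly}; this will considerably simplify the variance step.

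Next I would produce the P-recurrences for $g_n(1)$, $g_n'(1)$ and $g_n''(1)$ with the Maple package {\tt APCI} \cite{hou2012maple}. The Franel numbers themselves satisfy $(n+2)^2 g_{n+2}(1) = (7n^2+21n+16)\,g_{n+1}(1) + 8(n+1)^2 g_n(1)$, whose leading-order characteristic equation $L^2 - 7L - 8 = 0$ has root $L = 8$, pinning down the growth constant. Feeding the three recurrences to {\tt P-rec.m} \cite{hou2019asy} then yields asymptotic expansions of the form $g_n^{(i)}(1) = C_i'\, 8^n\, n^{r_i}(1 + \cdots)$, exhibiting $\{g_n(x)\}$ as well-defined with $\rho = 1$, $Q(\rho,n) = n\log 8$, and exponents $r_1 = -1 < r_2 = 0 < r_3 = 1$, so that $r_3 - r_1 = 2$.

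With the expansions in hand I would check the two conditions of Theorem \ref{t2.3}. Using cross-recurrences that express $g_n'(1)$ and $g_n''(1)$ through $g_{n+1}(1)$ and $g_n(1)$ (the analogues of (\ref{3.2}) and (\ref{3.3})), together with $\lim_n g_{n+1}(1)/g_n(1) = 8$, I would evaluate the limits of condition (1), expecting $a = C_2/C_1 = 1/2$ (consistent with $\mu_n = n/2$) and $b = C_3/C_1 = 1/4$. Forming $\sigma_n^2 = g_n''(1)/g_n(1) + \mu_n - \mu_n^2$ then produces, at the top order $n^{r_3-r_1} = n^2$, the coefficient $b - a^2 = 1/4 - 1/4 = 0$: the leading power cancels, and the surviving leading term should be $n/12$, which verifies condition (2) with $m = 1$ and gives $\sigma_n^2 \to +\infty$. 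Theorem \ref{t2.3} then yields the asymptotic normality of $g(n,k)$ with $\mu_n \sim n/2$ and $\sigma_n^2 \sim n/12$.

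The main obstacle is the same as in the Apéry case: because the $n^2$ contributions to $\sigma_n^2$ cancel exactly, the value $1/12$ is governed entirely by the second-order ($n^{-1}$ and $n^{-2}$) correction terms in the three asymptotic expansions of $g_n(1)$, $g_n'(1)$ and $g_n''(1)$. Extracting those corrections reliably from {\tt P-rec.m} and combining them without arithmetic error—so that the $n^2$ coefficient is confirmed to vanish and the $n^1$ coefficient is pinned to exactly $1/12$—is the delicate part of the argument; by contrast, the real-rootedness and the identification of $\rho$ and the exponents $r_i$ are comparatively routine.
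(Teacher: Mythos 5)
Your proposal is correct and follows essentially the same route as the paper: the ${}_3F_2$ representation with Lemma \ref{l3.3} for real-rootedness, {\tt APCI} recurrences and {\tt P-rec.m} expansions giving $\rho=1$, $r_1=-1<r_2=0<r_3=1$, cross-recurrences to pin $a=C_2/C_1=1/2$ and $b=C_3/C_1=1/4$, and the cancellation of the $n^2$ terms leaving $\sigma_n^2\sim n/12$. Your extra observation that the symmetry $g(n,k)=g(n,n-k)$ forces $\mu_n=n/2$ exactly is a nice shortcut the paper does not state explicitly (though its equation (\ref{3.9}) reflects the same fact), but it does not change the structure of the argument.
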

\begin{proof}
Since the Franel polynomial is defined as
\begin{align*}
g_{n}(x) = \sum_{k = 0}^{n} \binom{n}{k}^3 x^k = 
{_3F_2\left(
\begin{gathered}
-n,\ -n,\ -n\\
1,\ 1
\end{gathered}
;\,-x\right) },
\end{align*}
it follows from Lemma \ref{l3.3} that $g_{n}(x)$ has only negative real roots.\\
\indent Utilizing the Maple package {\tt APCI}, we obtain the following recurrence relations,
\begin{align*}
g_{n+2}(1) & = \frac{7n^2+21n+16}{(n+2)^2} g_{n+1}(1) + \frac{8(n+1)^2}{(n+2)^2} g_{n}(1),\\
g_{n+3}^{'}(1) & = \frac{2(3n+8)(3n^2+12n+11)}{(3n+4)(n+3)(n+2)}g_{n+2}^{'}(1)\\
& + \frac{45n^3+243n^2+432n+256}{(3n+4)(n+3)(n+2)}g_{n+1}^{'}(1)\\ 
& + \frac{8(n+1)^2(3n+7)}{(3n+4)(n+3)(n+2)}g_{n}^{'}(1),\\
g_{n+3}^{''}(1) & = \frac{3(n+2)(18n^4+100n^3+197n^2+173n+60)}{(n+3)(9n^2+11n+4)(n+1)^2}g_{n+2}^{''}(1)\\
& + \frac{3(n+2)^2(5n+7)(9n+8)}{(n+3)(n+1)(9n^2+11n+4)}g_{n+1}^{''}(1)\\
& + \frac{8(n+2)(9n^2+29n+24)}{(n+3)(9n^2+11n+4)}g_{n}^{''}(1).
\end{align*}
Next, using the Mathematica package {\tt P-rec.m}, we can derive the corresponding asymptotic expansions,
\begin{gather*}
g_{n}(1) = C_1\cdot \frac{8^n}{n}\left(1 - \frac{1}{3n} + \frac{1}{27n^2} + o\left(\frac{1}{n^2}\right)\right),\\
g_{n}^{'}(1) = C_2\cdot 8^n \left(1 - \frac{1}{3n} + \frac{1}{27n^2} + o\left(\frac{1}{n^2}\right)\right),\\
g_{n}^{''}(1) = C_3\cdot 8^n n \left(1 - \frac{2}{n} + \frac{22}{27n^2} + o\left(\frac{1}{n^2}\right)\right),
\end{gather*}
where $C_1, C_2, C_3$ are all constants. Consequently, we have that
\begin{align}
\lim_{n\to +\infty} \frac{g_{n+1}(1)}{g_{n}(1)} = \frac{g_{n+1}^{'}(1)}{g_{n}^{'}(1)} = \frac{g_{n+1}^{''}(1)}{g_{n}^{''}(1)} = 8.\label{3.6}
\end{align}
By utilizing the Maple package {\tt APCI} once more, we obtain the following recurrence relations,
\begin{align}
g_{n+1}(1) = {} & \frac{11n^2+7n+2}{(n+1)^2}g_{n}(1) - \frac{12(n-1)}{(n+1)^2}g_{n}^{'}(1) + \frac{12}{(n+1)^2}g_{n}^{''}(1),\label{3.7}\\
g_{n}(1) = {} & \frac{n+1}{6n^2+4n+1} g_{n+1}^{'}(1) + \frac{7n-5}{6n^2+4n+1} g_{n}^{'}(1) - \frac{6}{6n^2+4n+1} g_{n}^{''}(1).\label{3.8}
\end{align}
According to Theorem \ref{t2.3}, along with Equations (\ref{3.6}) and (\ref{3.7}), we get that
\begin{align*}
\frac{C_2}{C_1} = {} & \lim_{n\to +\infty} \frac{g_{n}^{'}(1)}{g_{n}(1)\cdot n}\\
= {} & \lim_{n\to +\infty} \frac{\frac{g_{n+1}(1)}{g_{n}(1)}+(1+\frac{g_{n+1}(1)}{g_{n}(1)})n}{2(1+\frac{g_{n+1}^{'}(1)}{g_{n}^{'}(1)})n}\\
= {} & \frac{1}{2}.
\end{align*}
Similarly, based on Theorem \ref{t2.3}, as well as Equations (\ref{3.6}) and (\ref{3.8}), we have
\begin{align*}
\frac{C_3}{C_1} = \frac{1}{4}.
\end{align*}
Thus, we know that the mean is given by
\begin{align}
\mu_{n} = {} & \frac{g_{n}^{'}(1)}{g_{n}(1)}
= \frac{n}{2}\label{3.9}
\end{align}
and
\begin{align}
\frac{g_{n}^{''}(1)}{g_{n}(1)} = \frac{n^2}{4}\left(1 - \frac{5}{3n} + \frac{2}{9n^2} + o\left(\frac{1}{n^2}\right)\right).\label{3.10}
\end{align}
Combining Equations (\ref{3.9}) and (\ref{3.10}), we find that the variance is
\begin{align*}
\sigma_{n}^{2} = {} & \frac{g_{n}^{''}(1)}{g_{n}(1)} + \mu_{n} - \mu_{n}^{2}\\
= {} & \frac{n^2}{4}\left(1 - \frac{5}{3n} + \frac{2}{9n^2} + o\left(\frac{1}{n^2}\right)\right) + \frac{n}{2} - \frac{n^2}{4}\\
\sim {} & \frac{n}{12}.
\end{align*}
Therefore, $g(n,k)$ is asymptotically normal.
\end{proof}
Finally, we list the asymptotic values of means and variances for some combinatorial sequences. 

\begin{table}[H]
    \renewcommand{\arraystretch}{1.2}
    \caption{The means and variances} 
    \label{table_example}
    \centering
    \begin{tabular}{|c|c|c|c|}
        \hline
        \diagbox{} & Means & Variances & References\\
        \hline
         The Narayana numbers & $\frac{n}{2}$ & $\frac{n}{8}$ & \cite{chen2020asymptotic}\\
        \hline       
        The unaerated Motzkin triangle$^{*}$ & $\frac{n}{3}$ & $\frac{n}{18}$ & \cite{chen2020analytic}\\
        \hline       
        The reversed unaerated Motzkin triangle & $\frac{n}{3}$ & $\frac{n}{18}$ & \cite{chen2020analytic}\\
        \hline
        The even Motzkin triangle$^{*}$ & $\frac{n}{3}$ & $\frac{n}{9}$ & \cite{chen2020analytic}\\
        \hline
        The reversed even Motzkin triangle$^{*}$ & $\frac{2n}{3}$ & $\frac{n}{9}$ & \cite{chen2020analytic}\\
        \hline
        The odd Motzkin triangle$^{*}$ & $\frac{n}{3}$ & $\frac{n}{9}$ & \cite{chen2020analytic}\\
        \hline
        The reversed odd Motzkin triangle$^{*}$ & $\frac{2n}{3}$ & $\frac{n}{9}$ & \cite{chen2020analytic}\\
        \hline
        The Schr\"{o}der triangle$^{*}$ & $\frac{(2-\sqrt{2})n}{2}$ & $\frac{\sqrt{2}n}{8}$ & \cite{chen2020analytic}\\
        \hline
        The reversed Schr\"{o}der triangle$^{*}$ & $\frac{\sqrt{2}n}{2}$ & $\frac{\sqrt{2}n}{8}$ & \cite{chen2020analytic}\\
        \hline      
        The generalized Narayana numbers & $\frac{n}{2}$ & $\frac{n}{8}$ & \cite{chen2022recurrences}\\
        \hline 
        The central trinomial coefficient & $\frac{n}{3}$ & $\frac{n}{18}$ & \cite{liang2023analytic}\\
        \hline 
        The Delannoy numbers & $\frac{n}{2}$ & $\frac{\sqrt{2}n}{8}$ & \cite{wang2019analytic}\\
        \hline
\end{tabular}
\end{table}
\nointerlineskip
{\noindent \it Remark 2.} The asterisk indicates that the specific numerical values were not given in the previous article. In this article, we have calculated the means and variances of the sequences. \\
{\noindent \it Remark 3.} Based on \cite{chen2020analytic}, we call an infinite lower triangular matrix $[a_{n,k}]_{n,k\ge 0}$ asymptotically normal if the numbers $a_{n,k}$ are asymptotically normal.

\end{document}